\newtheorem{theorem}{\textbf{Theorem}}
\newtheorem{lemma}{\textbf{Lemma}}
\newtheorem{passo}{\textbf{Step}}
\def\N {\mathbb{N}}
\def\Z {\mathbb{Z}}
\theoremstyle{remark}
\numberwithin{equation}{section}
\begin{document}

	\title[Jacobi Symbol Criterion]{A Jacobi Symbol Criterion Involving $k$-Fibonacci and $k$-Lucas numbers and Integer Points on Elliptic Curves}
	
		\author{Gersica Freitas}
	\address{Campus de Engenharia e Ciências Agrárias, Universidade Federal de Alagoas, Rio Largo - AL, Brazil}
	\email{gersica.freitas@ceca.ufal.br}
	
	\author{Jean Lelis}
	\address{Faculdade de Matemática/ICEN/UFPA, Belém - PA, Brazil.}
	\email{jeanlelis@ufpa.br}
	
	\author{Elaine Silva}
	\address{Instituto de Matemática, Universidade Federal de Alagoas, Maceió - AL, Brazil}
	\email{elaine.silva@im.ufal.br}

	%    General info
	\subjclass[2020]{Primary 11B37; Secondary 11Y50, 14H52}
	
	\keywords{Binary recurrences, Jacobi Symbol Criterion, Integer points on elliptic curves}
	
	\begin{abstract}
		In 1989, Ming Luo \cite{L2} showed that the Fibonacci number $U_n$ is Triangular if and only if $n=\pm1,2,4,8,10$. For this, he established a Jacobi Symbol Criterion. Moreover, he observed that this problem is equivalent to finding all integer points on two elliptic curves. In this paper, we prove a Jacobi Symbol Criterion for more general families of binary recurrences. In addition, applying the criterion and elementary methods, we  determine all integer points on the elliptic curves $y^2=5x^2(x+3)^2+4(-1)^n$.
	\end{abstract}
	
	\maketitle
	
	%&&&&&&&&&&&&&&&&&&&&&&&&&&&&&&&&&&&&&&&&&&&&&&&&&&&&&&&&&&&&&%
	\section{Introduction} \label{sec1}
	%&&&&&&&&&&&&&&&&&&&&&&&&&&&&&&&&&&&&&&&&&&&&&&&&&&&&&&&&&&&&&%
	
	Let $U=(U_n)_{n\geq0}$ be a binary recurrence sequence defined by initial terms $U_0,U_1\in\Z$ and the recurrence relation
	\[
	U_{n+2}=AU_{n+1}+BU_{n}\qquad(n\geq0),
	\]
	where $A$ and $B$ are non-zero integer numbers. Let $\alpha=(A+\sqrt{D})/2$ and $\beta=(A-\sqrt{D})/2$ be the zeros of the characteristic polynomial of $U$ given by $p(x)=x^2-Ax-B$, where $D=A^2+4B$ is the discriminant of $U$, and
	\[
	a=U_1-\beta U_0,\quad b=U_1-\alpha U_0,\quad C=U_1^2-AU_0U_1-BU_0^2.
	\]
	The sequence $U$ is called non-degenerate if $C\neq0$ and $\alpha/\beta$ is not a root of unity. It is well-known that if $U$ is non-degenerate, then we have that 
	\begin{equation}\label{eq001}
		U_n=\frac{a\alpha^n-b\beta^n}{\alpha-\beta}
	\end{equation}
	for all integer number $n\geq 0$. In fact, \eqref{eq001} holds whenever $\alpha-\beta\neq0$.  
	
	From this point on, we assume that $B=1$, $A\geq1$ and $U$ is non-degenerate. Therefore, it is also well-known that $U$ has a so-called associate sequence $V=(V_n)_{n\geq0}$ for which 
	\[
	V_n^2-DU_n^2=4C(-1)^n
	\]
	holds for all $n\geq0$, where $V_0=2U_1-AU_0$, $V_1=AU_1+2BU_0$ and $V$ satisfies the same recurrence relation of $U$ (for more details see \cite{T1}).
	
	Observe that by our assumption that $B=1$, if we consider $U_0=0$, $U_1=1$ and $A=1$, then we obtain that $U$ and $V$ are the Fibonacci and Lucas sequences, respectively. These sequences are famous for having several identities and interesting properties associated with them. For these reasons, it is common to find several generalizations of these sequences. Many authors consider the problem of studying the binary recurrence, so-called $k$-Fibonacci sequence $F_k=(F_{k,n})_{n\geq0}$ given by $F_{k,0}=0$, $F_{k,1}=1$ and
	\[
	F_{k,n+2}=kF_{k,n+1}+F_{k,n}
	\] 
	for $n\geq0$, and its associated $k$-Lucas sequence $L_k=(L_{k,n})_{n\geq0}$, where $L_{k,0}=2$ and $L_{k,1}=k$ (for more details see \cite{B1,C1,F1,F2}). 
	
	There are many articles concerning the mixed exponential-polynomial Diophantine equation
	\[
	\qquad U_n=P(x),
	\] 
	where $P\in\mathbb{Z}[x]$ is a polynomial. In particular, there is a special interest in the case that $P(x)$ has degree 2. Since $Y=\pm L_{k,n}$ and $X=\pm F_{k,n}$ are the complete set of solutions of the Pell equations 
	\begin{equation}
		Y^2-(k^2+4)X^2=4(-1)^n,
	\end{equation}  
	the condition $F_{k,n}=P(x)$ is equivalent to finding all integer solutions of the Diophantine equation $Y^2-(k^2+4)(P(x))^2=\pm 4$ (for more details see \cite{J1}). In particular, if $P(x)$ has degree 2, this problem is equivalent to find all integer points on these two elliptic curves.
	
	Thus, given integers $a,b$ and $c$ with $a\neq0$, we have that the solutions of equation $\pm F_{k,n}=a(z+b)(z+c)$ are the $X$-coordinates of integer points on the following elliptic curves
	\begin{equation}\label{eq2}
		y^2=a^2(k^2+4)(z+b)^2(z+c)^2+4(-1)^n.
	\end{equation}
	
	For some values of the parameters in the equation \eqref{eq2}, we can determine all integer points on the curves above estimating linear forms in elliptic logarithms. For example, the \texttt{IntegralQuarticPoints()} subroutine of Magma can be used to determine the solution of some equations of this type. However, in this paper, we follow the ideas of many authors (for example \cite{L2,M1,R1}) and consider the problem of determining these integer points with elementary methods. Suppose that
	\begin{equation}\label{eq1}
		\pm F_{k,n}=a(x+b)(x+c)
	\end{equation}
	with $a,b,c$ integers with $a\neq0$. 
	
	Note that the equations \eqref{eq1} have solution if and only if $(\pm 4aF_{k,n}+\Delta)$ is a perfect square, where $\Delta=a^2(b+c)^2-4a^2bc=a^2(b-c)^2=d^2$. In this case, Jacobi symbol
	\[
	\left(\frac{\pm4aF_{k,n}+d^2}{s}\right)=1
	\]
	is valid for all odd positive integers $s$. The Jacobi symbol is an important tool in the study of Diophantine equations involving perfect squares (for more examples see \cite{K1,K2,L2,M1}). In order to determine an $s$ such that the Jacobi symbol above is $-1$, which implies that $(\pm 4aF_{k,n}+\Delta)$ is not a perfect square, we shall prove the following Jacobi Symbol Criterion, which we believe to have independent interest.
	
	\begin{theorem}\label{theo1}
		Let $a,d,k$ be positive integers, where $d$ and $k$ are odd with $d^2>8a$. If  $n\equiv \pm 2 \pmod{6}$ and $\gcd(a,L_n)=1$, then
		\[
		\left(\frac{\pm 4aF_{k,2n}+d^2}{L_{k,2n}}\right)=-\left(\frac{\pm 8aF_{k,n}+d^2L_{k,n}}{64a^2+(k^2+4)d^4}\right),
		\]
		whenever the right Jacobi symbol is proper. 
	\end{theorem}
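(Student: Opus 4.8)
The plan is to prove the identity by relating both Jacobi symbols to congruence conditions on the Fibonacci/Lucas quantities, exploiting the reciprocity-type behavior of the Jacobi symbol together with the classical $k$-Fibonacci identities. The starting observation is the Pell relation $L_{k,m}^2 - (k^2+4)F_{k,m}^2 = 4(-1)^m$, which will let me replace the modulus $L_{k,2n}$ in the left symbol and the modulus $64a^2+(k^2+4)d^4$ in the right symbol by more tractable expressions modulo the relevant integers.

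\begin{proof}[Proof sketch]
First I would record the doubling formulas $F_{k,2n}=F_{k,n}L_{k,n}$ and $L_{k,2n}=L_{k,n}^2-2(-1)^n$, together with the Pell identity $L_{k,n}^2-(k^2+4)F_{k,n}^2=4(-1)^n$; under the hypothesis $n\equiv\pm2\pmod 6$ one checks that $L_{k,n}$ is odd, so all the moduli appearing are odd and the Jacobi symbols are defined. The strategy is to compute the \emph{left} symbol $\left(\frac{\pm4aF_{k,2n}+d^2}{L_{k,2n}}\right)$ by reducing the numerator modulo $L_{k,2n}$: using $L_{k,2n}=L_{k,n}^2-2(-1)^n$ and $(k^2+4)F_{k,n}^2\equiv L_{k,n}^2-4(-1)^n\pmod{L_{k,2n}}$, I would multiply the numerator by a convenient square (namely a power of $(k^2+4)$ or of $L_{k,n}$, which do not change the symbol) so as to turn $\pm4aF_{k,2n}+d^2=\pm4aF_{k,n}L_{k,n}+d^2$ into something proportional to the numerator $\pm8aF_{k,n}+d^2L_{k,n}$ of the right symbol.

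Next I would attack the \emph{right} symbol by applying Jacobi reciprocity to swap $\left(\frac{\pm8aF_{k,n}+d^2L_{k,n}}{64a^2+(k^2+4)d^4}\right)$ into $\left(\frac{64a^2+(k^2+4)d^4}{L_{k,2n}}\right)$-type symbols, keeping careful track of the sign $(-1)^{\frac{s-1}{2}\cdot\frac{t-1}{2}}$ that reciprocity introduces. The point of the modulus $64a^2+(k^2+4)d^4$ is that it is engineered so that, modulo $L_{k,2n}$, the quantity $64a^2+(k^2+4)d^4$ factors through the relation $(\pm8aF_{k,n}+d^2L_{k,n})(\mp8aF_{k,n}+d^2L_{k,n})=d^4L_{k,n}^2-64a^2F_{k,n}^2$, which by the Pell identity can be rewritten in terms of $L_{k,2n}$; the condition $d^2>8a$ guarantees positivity and that the auxiliary factors are nonzero. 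I would then match the two computations, so that the combined reciprocity sign and the quadratic-residue reductions collapse to the single factor $-1$ claimed.

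The main obstacle I expect is the bookkeeping of signs: both the $(-1)^n$ appearing in the Pell and doubling identities and the $(-1)^{\frac{s-1}{2}\frac{t-1}{2}}$ from reciprocity must be pinned down, and this is exactly where the hypothesis $n\equiv\pm2\pmod 6$ is used, since it fixes the residues of $L_{k,n},L_{k,2n},F_{k,n}\pmod{4}$ (via the periodicity of $k$-Lucas numbers modulo small powers of $2$) and hence the parities controlling the reciprocity sign; the coprimality $\gcd(a,L_n)=1$ ensures the multiplier I introduce is invertible modulo $L_{k,2n}$ so that multiplying by it is legitimate. The phrase ``whenever the right Jacobi symbol is proper'' signals that I must assume $64a^2+(k^2+4)d^4$ is odd and positive and coprime to the numerator, which is precisely what lets reciprocity be applied without degenerate cases; once those sign computations are carried out, the identity follows by comparing the two reductions.
\end{proof}
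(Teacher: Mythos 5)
Your overall plan---reduce the numerator of the left symbol modulo $L_{k,2n}$ using the doubling identities, then flip with reciprocity and compare with the right symbol---is the same general strategy as the paper, but the step that actually produces the modulus $64a^2+(k^2+4)d^4$ is missing, and the substitute you propose does not work. You suggest exploiting $(\pm8aF_{k,n}+d^2L_{k,n})(\mp8aF_{k,n}+d^2L_{k,n})=d^4L_{k,n}^2-64a^2F_{k,n}^2$ together with the Pell identity; but substituting $L_{k,n}^2=(k^2+4)F_{k,n}^2+4(-1)^n$ there yields $\bigl((k^2+4)d^4-64a^2\bigr)F_{k,n}^2+4(-1)^nd^4$, i.e.\ the combination $(k^2+4)d^4-64a^2$, with the wrong sign on $64a^2$, and it does not reduce to anything proportional to $L_{k,2n}$. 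The identity the argument actually needs is
\[
16a^2d^2L_{k,2n}=\frac{16a^2d^2\bigl((k^2+4)F_{k,n}^2+L_{k,n}^2\bigr)}{2}=(\pm 8aF_{k,n}+d^2L_{k,n})Q\mp\bigl(64a^3+(k^2+4)ad^4\bigr)F_{k,n}L_{k,n},
\]
with $Q=\pm(k^2+4)ad^2F_{k,n}+8a^2L_{k,n}$, which says that \emph{modulo} $\pm8aF_{k,n}+d^2L_{k,n}$ the number $L_{k,2n}$ (up to the square $16a^2d^2$) becomes the product $\mp a\,(64a^2+(k^2+4)d^4)\,F_{k,n}L_{k,n}$. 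That is the only place the modulus of the right-hand symbol can come from, and without it your reduction cannot reach the stated conclusion.

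Beyond that central gap, the sketch leaves unexecuted exactly the parts where the content lies: you defer all sign bookkeeping, but the signs are not incidental---they require $L_{k,n}\equiv 3\pmod 4$ and $L_{k,2n}\equiv 7\pmod 8$ (oddness of $L_{k,n}$, which is all you claim to check, is not enough), plus the determination of $F_{k,n}\bmod 4$ to evaluate $\bigl(\frac{F_{k,n}}{\pm8aF_{k,n}+d^2L_{k,n}}\bigr)$. You also do not address the residual factors $\bigl(\frac{F_{k,n}L_{k,n}}{\pm8aF_{k,n}+d^2L_{k,n}}\bigr)$ and $\bigl(\frac{a}{\cdot}\bigr)$; the paper disposes of the latter by writing $a=2^sb$ and using $\pm8aF_{k,n}L_{k,n}+d^2L_{k,n}^2\equiv 1\pmod 8$ together with $\gcd(a,L_{k,n})=1$ (this, not invertibility of a multiplier, is where the coprimality hypothesis is used). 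As written, the proposal is a reasonable outline of the first move but does not constitute a proof.
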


%In the section \ref{seccriterio} we shall prove this criterion and in the section \ref{section 3} we shall use this Jacobi Symbol Criterion to determine all integer points on the elliptic curves $y^2=5x^2(x+3)^2+4(-1)^n$.  We remark that in Theorem \ref{theo1}, we establish a new Jacobi Symbol Criterion involving the sequence $F_k$ and its associated sequence $L_k$, and which we believe are of independent interest.

%Furthermore, we shall apply this criterion in the section 3 for to prove that the equation \eqref{eq1} for Fibonacci number there are no solution in the case $a=1,b=0,c=3$.

%&&&&&&&&&&&&&&&&&&&&&&&&&&&&&&&&&&&&&&&&&&&&&&&&&&&&&&&&&&&&&%
{\section{Proof of Jacobi Symbol Criterion} \label{seccriterio}}
%&&&&&&&&&&&&&&&&&&&&&&&&&&&&&&&&&&&&&&&&&&&&&&&&&&&&&&&&&&&&&%}

Firstly, let  $(L_{k,n})_{n\geq0}$ be the associate sequence given by $L_{k,n+2}=kL_{k,n+1}+L_{k,n},$ for $n\geq0$ and initial terms $L_{k,0}=2$ and $L_{k,1}=k$. The proof of Theorem \ref{theo1} requires the following identities:

\begin{eqnarray}
	L_{k,2n}&=&L_{k,n}^2-2(-1)^n;\label{1}\\
	F_{k,2n}&=&F_{k,n}L_{k,n};\label{2}\\
	2L_{k,2n}&=&(k^2+4)F_{k,n}^2+L_{k,n}^2.\label{3}
\end{eqnarray}

These identities are the generalization of well-known identities associated with Fibonacci and Lucas sequences, for more details see \cite{F1}. Since $k$ is odd and  $n\equiv \pm 2 \pmod{6}$, we have that $L_{k,n}\equiv 3 \pmod{4}$ and $L_{k,2n}\equiv 7 \pmod{8}$. So, we can consider the Jacobi symbol
\[
\left(\frac{\pm 4aF_{k,2n}+d^2}{L_{k,2n}}\right).
\]

Moreover, the Jacobi symbol $(2 \mid  L_{k,2n})=1$, thus 
\[
\left(\frac{\pm 4aF_{k,2n}+d^2}{L_{k,2n}}\right) =\left(\frac{2 }{L_{k,2n}}\right)\left(\frac{\pm 4aF_{k,2n}+d^2 }{L_{k,2n}}\right)=\left(\frac{\pm 8aF_{k,2n}+2d^2 }{L_{k,2n}}\right).
\]
By \eqref{1}, we have $2\equiv L_{k,n}^2 \pmod{L_{k,2n}}$, since $n$ is an even integer. Further, using \eqref{2} we obtain
\[
\left(\frac{\pm 4aF_{k,2n}+d^2}{L_{k,2n}}\right)=\left(\frac{\pm 8aF_{k,n}L_{k,n}+d^2L_{k,n}^2}{L_{k,2n}}\right).
\]
Note that $8a<d^2$ by hypothesis, thus $\pm 8aF_{k,n}L_{k,n}+d^2L_{k,n}^2>0$. So, by quadratic reciprocity it follows that
\[
\left(\frac{\pm 4aF_{k,2n}+d^2}{L_{k,2n}}\right)  =\left(\frac{L_{k,2n}}{\pm8aF_{k,n}L_{k,n}+d^2L_{k,n}^2}\right)
=\left(\frac{L_{k,2n}}{L_{k,n}}\right)\left(\frac{L_{k,2n}}{\pm 8aF_{k,n}+d^2L_{k,n}}\right),\]
since $d^2L^2_{k,n}\equiv 1 \pmod{4}$. Now, by \eqref{1} and using $L_{k,n}\equiv3 \pmod{4}$ we obtain $(L_{k,2n}\mid L_{k,n})=-(2\mid L_{k,n})$. Furthermore, \eqref{3} give us
\[
\left(\frac{\pm 4aF_{k,2n}+d^2}{L_{k,2n}}\right)=-\left(\frac{2}{L_{k,n}}\right)\left(\frac{\frac{1}{2}((k^2+4)F^2_{k,n}+L^2_{k,n})}{\pm 8aF_{k,n}+d^2L_{k,n}}\right).
\]

In order to exchange the sum $((k^2+4)F^2_{k,n}+L^2_{k,n})/2$ for a product, we can multiply it by a Jacobi symbol of a suitable perfect square. For this, we use the following identity
\[
\frac{16a^2d^2((k^2+4)F^2_{k,n}+L^2_{k,n})}{2}=(\pm 8aF_{k,n}+d^2L_{k,n})Q \mp (64a^3+(k^2+4)ad^4)F_{k,n}L_{k,n},
\]
where we consider $Q=(k^2+4)ad^2F_{k,n}+8a^2L_{k,n}$ in the case $8aF_{k,n}+d^2L_{k,n}$ and $Q=-(k^2+4)ad^2F_{k,n}+8a^2L_{k,n}$ in other case.

It follows that
\begin{eqnarray*}
	\left(\frac{\pm 4aF_{k,2n}+d^2}{L_{k,2n}}\right)=-\left(\frac{2}{L_{k,n}}\right)\left(\frac{\mp(64a^3+(k^2+4)ad^4)}{\pm 8aF_{k,n}+d^2L_{k,n}}\right)\left(\frac{F_{k,n}L_{k,n}}{\pm 8aF_{k,n}+d^2L_{k,n}}\right).
\end{eqnarray*}

Now,
\begin{eqnarray*}
   \left(\frac{L_{k,n}}{\pm 8aF_{k,n}+d^2L_{k,n}}\right)=-\left(\frac{\pm 8aF_{k,n}+d^2L_{k,n}}{L_{k,n}}\right)=\mp\left(\frac{2a}{L_{k,n}}\right)\left(\frac{F_{k,n}}{L_{k,n}}\right),
\end{eqnarray*}
and $n\equiv \pm2 \pmod{6}$ 
implies $F_{k,n}\equiv (\pm 1)(-1)^{\frac{k-1}{2}} \pmod{4}$, thus
\[
\left(\frac{F_{k,n}}{\pm 8aF_{k,n}+d^2L_{k,n}}\right)=(\pm 1)(-1)^{\frac{k-1}{2}}\left(\frac{\pm 8aF_{k,n}+d^2L_{k,n}}{F_{k,n}}\right)=\left(\frac{F_{k,n}}{L_{k,n}}\right).
\]

From this,
\begin{eqnarray*}
	\left(\frac{\pm 4aF_{k,2n}+d^2}{L_{k,2n}}\right)&=&\pm\left(\frac{2}{L_{k,n}}\right)\left(\frac{\mp(64a^3+(k^2+4)ad^4)}{\pm 8aF_{k,n}+d^2L_{k,n}}\right)\left(\frac{2a}{L_{k,n}}\right)\\
	&=&-\left(\frac{a}{L_{k,n}}\right)\left(\frac{a}{\pm 8aF_{k,n}+d^2L_{k,n}}\right)\left(\frac{64a^2+(k^2+4)d^4}{\pm 8aF_{k,n}+d^2L_{k,n}}\right)\\
	&=&-\left(\frac{a}{\pm 8aL_{k,n}F_{k,n}+d^2L_{k,n}^2}\right)\left(\frac{64a^2+(k^2+4)d^4}{\pm 8aF_{k,n}+d^2L_{k,n}}\right).
\end{eqnarray*}

Writing $a=2^sb$, with $2\nmid b$, we obtain
\begin{eqnarray*}
	\left(\frac{a}{\pm 8aL_{k,n}F_{k,n}+d^2L_{k,n}^2}\right)&=& \left(\frac{2^sb}{\pm 8aL_{k,n}F_{k,n}+d^2L_{k,n}^2}\right)\\
	&=&\left(\frac{2^s}{\pm 8aF_{k,n}L_{k,n}+d^2L^2_{k,n}}\right)\left(\frac{d^2L^2_{k,n}}{b}\right)\\
	&=&1,
\end{eqnarray*}
since
$\pm 8aF_{k,n}L_{k,n}+d^2L^2_{k,n} \equiv 1 \pmod 8$ and $b\mid a$. Finally, we conclude that 
\[
\left(\frac{\pm 4aF_{k,2n}+d^2}{L_{k,2n}}\right)=-\left(\frac{64a^2+(k^2+4)d^4}{\pm 8aF_{k,n}+d^2L_{k,n}}\right)=-\left(\frac{\pm 8aF_{k,n}+d^2L_{k,n}}{64a^2+(k^2+4)d^4}\right).
\]
\qed

%---------------------------------------------------------------------%
\section{The Curves $y^2=5x^2(x+3)^2+4(-1)^n$}\label{section 3}
%---------------------------------------------------------------------%

In this section, we consider the elliptic curves $y^2=5x^2(x+3)^2+4(-1)^n$ to exemplify the method. It is well-known that $X=\pm F_n$ and $Y=\pm L_n$ are the complete set of solutions of the Diophantine equations $$Y^2-5X^2= 4(-1)^n.$$ So, we conclude that
the curves $y^2=5x^2(x+3)^2+4(-1)^n$ have integer points if and only if the equation $\pm F_n=x(x+3)$ has a solution. We shall prove the following theorem 

\begin{theorem}\label{theo2}
If $(x,y)$ is a integer point on the elliptic curves $$y^2=5x^2(x+3)^2+4(-1)^n$$ then $(x,y,n)\in\{(-3,\pm 2,0),(-2,\pm4,3),(-1,\pm4,3),(0,\pm 2,0)\}$.
\end{theorem}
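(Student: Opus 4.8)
The plan is to turn an integer point into a quadratic condition in $x$ and split on the sign of $x(x+3)$. By the correspondence recalled in the excerpt, a point on $y^2=5x^2(x+3)^2+4(-1)^n$ forces $x(x+3)=\pm F_n$ and $y=\pm L_n$; completing the square, this is equivalent to
\[
(2x+3)^2=\pm 4F_n+9 .
\]
Because $2x+3$ is odd, the right-hand side must be an odd perfect square, and since the two signs behave very differently I would analyze them separately.

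First I would dispose of the bounded case $x(x+3)=-F_n\le 0$. Here $-3\le x\le 0$, so $x(x+3)\in\{0,-2\}$ and hence $F_n\in\{0,2\}$; this is a finite check giving $n\in\{0,3\}$, and restoring $y=\pm L_0=\pm2$ and $y=\pm L_3=\pm4$ yields exactly the four listed triples. This part is elementary and costs nothing.

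The substance lies in the case $x(x+3)=F_n\ge0$, i.e. $4F_n+9=\square$; after removing $F_n=0$ (which gives $n=0$, already listed) I would combine congruences with the Jacobi Symbol Criterion. Reducing $4F_n+9\equiv(2x+3)^2\equiv1\pmod 8$ forces $F_n$ even, hence $3\mid n$, and further reductions (using the Pisano periods modulo the primes dividing $469=7\cdot 67$, together with small auxiliary moduli) restrict $n$ to a short union of residue classes modulo a fixed period. On each surviving class I would apply Theorem \ref{theo1} with $k=1$, $a=1$, $d=3$, so that $d^2=9>8=8a$ and $64a^2+(k^2+4)d^4=64+5\cdot 81=469$: for an index of the form $2m$ with $m\equiv\pm2\pmod 6$ the criterion replaces the intractable symbol $\left(\frac{4F_n+9}{L_n}\right)$ by $-\left(\frac{8F_m+9L_m}{469}\right)$, which depends only on $F_m$ and $L_m$ modulo $469$ and is therefore periodic and finitely checkable. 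Exhibiting that this reduced symbol equals $+1$ on the relevant classes gives $\left(\frac{4F_n+9}{L_n}\right)=-1$, contradicting the fact that a perfect square is a quadratic residue modulo every odd number coprime to it; the finitely many indices not reached in this way I would settle by direct computation.

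The main obstacle is the organization of the positive case: arranging the residue classes of $n$ so that each is eliminated either by an outright elementary congruence or by the criterion, and then evaluating $\left(\frac{8F_m+9L_m}{469}\right)$ uniformly over the admissible classes while verifying at each step that the symbol is proper, that is $\gcd(8F_m+9L_m,469)=1$. Once the period of $(F_m,L_m)$ modulo $469$ is pinned down the evaluation is finite, but ensuring that the indices left after the elementary congruences actually have the form $2m$ with $m\equiv\pm2\pmod 6$ required by Theorem \ref{theo1} is the delicate point.
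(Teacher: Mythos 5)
Your treatment of the bounded case $x(x+3)=-F_n$ is fine and matches the paper. The positive case, however, has a genuine gap, and it is exactly at the point you flag as ``delicate'': that point is not delicate but fatal to the plan as written. You correctly observe that $x(x+3)$ is always even, so $F_n$ is even and $3\mid n$. But Theorem \ref{theo1} applies to an index $2m$ only when $m\equiv\pm2\pmod 6$, which forces $3\nmid m$ and hence $3\nmid 2m$. Since every surviving $n$ satisfies $3\mid n$, \emph{no} admissible index is of the form $2m$ with $m\equiv\pm2\pmod 6$, and the criterion can never be invoked for the symbol $\left(\frac{4F_n+9}{L_n}\right)$ with modulus $L_n$. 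The congruence sieve cannot rescue this, however the residue classes are arranged.

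The missing ingredient is the paper's Lemma \ref{lema1} (McDaniel): for $g$ odd, $F_{2kg+m}\equiv\pm F_{2k+m}\pmod{L_{2k}}$ according to $g\bmod 4$. One does not work modulo $L_n$ at all. Instead, Lemma \ref{lema2} (a five-stage sieve with primes $3,7,5,11,401,3001,101,13,29,281,2801,47,1601$) forces $n\equiv0\pmod{2800}$, so $n=2kg$ with $g$ odd, $3\nmid k$, and $k\in\{2^w,\,2^w\cdot5^2\cdot7\}$, $w\geq3$. The factor $3$ (and the rest of the odd part of $n$) is absorbed into $g$, Lemma \ref{lema1} gives $\left(\frac{4F_n+9}{L_{2k}}\right)=\left(\frac{\pm4F_{2k}+9}{L_{2k}}\right)$, and only then is the criterion applied at index $k\equiv\pm2\pmod6$. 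Note also that the resulting case analysis is not over residue classes of $n$ modulo a fixed period: it runs over $w=\nu_2(n)-1\bmod 8$ and $t\bmod4$, using that $F_{2^w}$ and $L_{2^w}$ are periodic in $w$ modulo $67$ (Lemma \ref{lema4}), after an auxiliary lemma (Lemma \ref{lema3}) shows the factor $7$ of $469=7\cdot67$ always contributes $+1$. Your proposal would need all of this structure added before the finite verification you describe could be carried out.
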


Clearly, $-F_n=x(x+3)$ if and only if $x\in\{-3,-2,-1,0\}$. Further, if $n=0$ and $x=0$, then $F_n=x(x+3)$. We shall prove that there are no other solutions. To do this, we will use Jacobi Symbol Criterion and some auxiliary lemmas. The following lemma will be used to reduce $F_{n}$ modulo $L_{2k}$, in order to apply Jacobi Symbol Criterion \ref{theo1}. 

%Introdução ao Lema 1

\begin{lemma} \label{lema1}
	For all integers $k$ and $m$, and $g$ odd,
	\[
	F_{2kg+m}\equiv \left\{
	\begin{array}{ccc}
		F_{2k+m}\pmod{L_{2k}} & if & g\equiv 1 \pmod{4}\\
		-F_{2k+m}\pmod{L_{2k}} & if & g\equiv 3 \pmod{4}.\\
	\end{array}
	\right.
	\]
\end{lemma}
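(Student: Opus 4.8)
The plan is to exploit the fact that, modulo $L_{2k}$, advancing the Fibonacci index by $2k$ acts as a sign-flip, so that iterating it produces a period-four pattern. The natural starting point is the standard Lucas--Fibonacci identity $L_nF_m=F_{m+n}+(-1)^nF_{m-n}$, valid for all integers $m,n$ (see \cite{F1}). Taking $n=2k$, which is even, collapses the sign $(-1)^n$ and yields
\[
L_{2k}F_N=F_{N+2k}+F_{N-2k}
\]
for every integer $N$. Equivalently, one may note that $\alpha^{2k}$ and $\beta^{2k}$, where $\alpha,\beta=(1\pm\sqrt{5})/2$, are the roots of $x^2-L_{2k}x+1$, since $\alpha^{2k}+\beta^{2k}=L_{2k}$ and $(\alpha\beta)^{2k}=(-1)^{2k}=1$; hence the subsequence $W_j:=F_{2kj+m}$ satisfies the second-order recurrence $W_{j+1}=L_{2k}W_j-W_{j-1}$.

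First I would reduce this recurrence modulo $L_{2k}$, where the middle term drops out and leaves $W_{j+1}\equiv -W_{j-1}\pmod{L_{2k}}$; shifting the index then gives the cleaner two-step relation $W_{j+2}\equiv -W_j\pmod{L_{2k}}$. These congruences hold for all integers $j$ (and for negative $2k$ or $m$) because the underlying identity is valid over $\Z$ once Fibonacci numbers are extended to negative indices via $F_{-n}=(-1)^{n+1}F_n$.

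Next I would run a short induction on the odd integer $g$, starting from $W_1=F_{2k+m}$. Each application of $W_{j+2}\equiv -W_j$ reverses the sign, so a straightforward induction, carried out in both the increasing and decreasing directions so as to cover all odd $g\in\Z$, gives $W_g\equiv (-1)^{(g-1)/2}F_{2k+m}\pmod{L_{2k}}$. Finally I would translate this exponent into the stated congruence condition: $(g-1)/2$ is even exactly when $g\equiv 1\pmod 4$ and odd exactly when $g\equiv 3\pmod 4$, which is precisely the two-case conclusion of the lemma.

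I do not expect a serious obstacle here. The only points needing care are invoking the base identity in its even-index form, so that the factor $(-1)^n$ genuinely disappears, and verifying that the induction remains valid for negative odd $g$; both are immediate consequences of the identity holding for all integers.
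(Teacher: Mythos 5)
Your argument is correct: the identity $L_nF_m=F_{m+n}+(-1)^nF_{m-n}$ with $n=2k$ gives $W_{j+1}\equiv -W_{j-1}\pmod{L_{2k}}$ for $W_j=F_{2kj+m}$, and the two-sided induction on odd $g$ yields $W_g\equiv(-1)^{(g-1)/2}F_{2k+m}$, which matches the two cases of the lemma. The paper itself gives no proof and simply cites McDaniel \cite{M1}; your write-up is essentially the standard argument found there, so it adds a self-contained proof of the same kind rather than a genuinely different route.
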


The proof of this lemma can be found in \cite{M1}. On the way to prove Theorem \ref{theo2} we use that $n=2\cdot 2^w\cdot5^2\cdot 7t$, with $w\geq3$ and $t$ odd. In order to guarantee this condition we proof the following lemma.

\begin{lemma}\label{lema2}
	If $F_n=x(x+3)$, for $n,x\in\N$, then $n\equiv 0 \pmod{2800}$.
\end{lemma}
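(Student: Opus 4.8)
The plan is to convert the problem into a single square condition and then to pin down $n$ modulo each prime power dividing $2800=2^4\cdot 5^2\cdot 7$ by quadratic--residue obstructions. First I would complete the square: from $F_n=x(x+3)$ one gets
\[
4F_n+9=(2x+3)^2,
\]
so it is equivalent, and cleaner, to show that \emph{if $4F_n+9$ is a perfect square then $16\mid n$, $25\mid n$ and $7\mid n$}; the conclusion $n\equiv 0\pmod{2800}$ then follows at once from $\operatorname{lcm}(16,25,7)=2800$ by the Chinese Remainder Theorem.

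The engine is the elementary fact that if $S$ is a perfect square and $N$ is odd with $\gcd(S,N)=1$, then $\left(\frac{S}{N}\right)=1$; equivalently, producing one odd $N$ with $\left(\frac{4F_n+9}{N}\right)=-1$ rules out that value of $n$. Since $(F_n\bmod N)$ is purely periodic with Pisano period $\pi(N)$, the residue $4F_n+9\bmod N$, hence the Jacobi symbol $\left(\frac{4F_n+9}{N}\right)$, depends only on $n\bmod\pi(N)$. For moduli of the form $N=L_{2k}$ I would instead use Lemma~\ref{lema1}: writing $n=2kg+m$ gives $F_n\equiv\pm F_{2k+m}\pmod{L_{2k}}$, so the symbol is governed by $m=n\bmod 2k$ together with $g\bmod 4$. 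Either way the admissible residue classes of $n$ form a finite, explicitly computable set, and the task is to choose moduli whose periods are divisible respectively by $16$, by $25$, and by $7$, and to intersect the resulting admissible sets by CRT until, modulo each prime power, only $n\equiv 0$ survives.

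A few cheap constraints start the process and show its flavor. Modulo $8$ one has $4F_n+9\equiv 4F_n+1$, and an odd square is $\equiv 1\pmod 8$, forcing $F_n$ even, i.e. $3\mid n$. Modulo $3$ one has $4F_n+9\equiv F_n$, which must be a residue, eliminating $n\equiv 3,5,6\pmod 8$. Modulo $7$, where $\pi(7)=16$, the square test leaves only $n\equiv 0,4,5,8,11,12\pmod{16}$; intersecting this with the mod-$3$ information already yields $n\equiv 0\pmod 4$. Continuing in the same manner---bringing in moduli whose period is divisible by a higher power of two (for instance $\pi(32)=48$) to complete the $2$-part, by $25$ (for instance $\pi(25)=100$) for the $5$-part, and by $7$ (for instance $\pi(29)=14$ and $\pi(13)=28$, together with a further prime or two) for the $7$-part---one forces in turn $16\mid n$, $25\mid n$ and $7\mid n$.

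The main obstacle is precisely this bookkeeping: no single modulus forces any one of the three divisibilities on its own, because the admissible classes are always proper but nontrivial subsets, so the argument is a finite CRT intersection over a carefully selected list of moduli, and one must verify that every residue class failing a target divisibility is killed by some modulus on the list. Care is also needed to check the coprimality hypotheses that make each Jacobi symbol meaningful and, when Lemma~\ref{lema1} is invoked, to track the sign $\pm F_{2k+m}$ coming from $g\bmod 4$. (The sharper descent that finishes Theorem~\ref{theo2} is where the Jacobi Symbol Criterion, Theorem~\ref{theo1}, enters; for Lemma~\ref{lema2} the periodicity and reduction arguments suffice.) Once the three local conditions are in hand, the global conclusion $n\equiv 0\pmod{2800}$ is immediate.
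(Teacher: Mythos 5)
Your overall strategy is the same as the paper's: observe that $4F_n+9=(2x+3)^2$ is a perfect square, so $\left(\frac{4F_n+9}{N}\right)=1$ for every coprime odd $N$, and then eliminate residue classes of $n$ by exhibiting odd moduli for which this Jacobi symbol equals $-1$, exploiting the periodicity of $F_n$ modulo $N$. Your computation forcing $n\equiv 0\pmod 4$ via the moduli $3$ and $7$ reproduces the paper's Step~1 exactly. The genuine gap is that this is the only step you actually carry out: the passage to $16\mid n$, $25\mid n$ and $7\mid n$ is deferred to ``continuing in the same manner,'' and that finite verification is the entire content of the lemma. There is no a priori reason a workable list of moduli exists; the paper must produce one explicitly (the primes $5$, $11$, $401$, $3001$, $101$, $13$, $29$, $281$, $2801$, $47$, $1601$, with Pisano periods up to $1400$) and check every elimination, including some nontrivial interleaving (e.g.\ the class $n\equiv 140\pmod{200}$ is not killed modulo $401$ but is recast as $n\equiv 40\pmod{100}$ and killed modulo $3001$). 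Until such a list is exhibited and verified, you have a plan rather than a proof.

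Moreover, the two concrete moduli you float for the remaining steps would fail. A Jacobi symbol with denominator $25=5^2$ satisfies $\left(\frac{a}{25}\right)=\left(\frac{a}{5}\right)^2\in\{0,1\}$, so it can never take the value $-1$ and gives no obstruction at all; and $32$ is even, so $\left(\frac{\cdot}{32}\right)$ is not a Jacobi symbol (and $\pi(32)=48$ is not divisible by $32$ in any case). What is needed are \emph{odd prime} moduli $Q$ whose Pisano period is divisible by the residue you are trying to detect, which is precisely what the paper's Steps 2--5 supply. Finally, framing the goal as three independent divisibilities understates the interdependence: the paper deduces $7\mid n$ only after $100\mid n$ is known, by reducing $100k$ modulo $28$ and $14$, which is what keeps the required periods (hence the primes) manageable.
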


\begin{proof}
	If $n$ and $x$ are integer numbers such that $F_n=x(x+3)$, then $4F_n+9$ is a perfect square. Hence, the Jacobi symbol $(4 F_n+9\mid Q)=1$ for every odd positive integer $Q$. So the idea of the proof is to show that if $n$ is not congruent to $0$ modulo $2800$, then there is an odd prime $Q$ such that $(4 F_n+9\mid Q)=-1$. 
	
	The proof will be divided into five steps. In each step, we use some Jacobi symbol properties and that $(F_n)_{n\geq0}$ is periodic modulo $Q$.
	
	\begin{passo}
		$n\equiv 0 \pmod{4}$.
	\end{passo}
	\noindent In fact, $F_n$ is periodic modulo $3$ and $7$ with period $8$ and $16$, respectively, and
	\begin{eqnarray*}
			\left(\frac{4 F_n+9}{3}\right)=-1,& &\mbox{if}\ n\equiv 3,5,6 \pmod{8};\\
			\left(\frac{4 F_n+9}{7}\right)=-1,& &\mbox{if}\ n\equiv \pm 1, \pm 2, \pm 3, \pm 6, \pm 7 \pmod{16}.
	\end{eqnarray*}
	thus, if $(4 F_n+9\mid Q)=1$ for all odd positive integers $Q$, then $n \equiv 0 \pmod{4}$.
	
	\begin{passo}
		$n\equiv 0 \pmod{20}$.
	\end{passo}
	\noindent By step 1, we have that $n\equiv 0,4,8,12,16\pmod{20}$ and $n\equiv 0,4,8,2,6\pmod{10}$. Using that $F_n$ is periodic modulo $5$ and modulo $11$, with periods $20$ and $10$, respectively, and
	\[
	\left(\frac{4 F_n+9}{Q}\right)=-1,\ \mbox{ if }\ (n,Q)=(4,11),(8,5),(12,11), (16,5),
	\]
	we get $n\equiv 0 \pmod{20}$.
	\begin{passo}
		$n\equiv 0 \pmod{100}$.
	\end{passo}
	
	\noindent Since $n\equiv 0 \pmod{20}$, we have $n\equiv 0,\pm20,\pm40,\pm60,\pm80,100 \pmod{200}$. Note that $F_n$ is periodic modulo $401$, with periods $200$ and
	\[
	\left(\frac{4 F_n+9}{401}\right)=-1,\ \mbox{ if }\ n\equiv \pm20,40,60,\pm80 \pmod{200},
	\]
	 moreover, if $n\equiv 140\pmod{200}$, then $n\equiv 40\pmod{100}$ and if $n\equiv 160\pmod{200}$, then $n\equiv 10\pmod{50}$. Using that $F_n$ is periodic modulo $3001$ and $101$, with periods $100$ and $50$, respectively, and  
	\[
	\left(\frac{4 F_n+9}{Q}\right)=-1,\ \mbox{ if }\ (n,Q)=(40,3001),(10,101),
	\]
	we obtain that $n\equiv 0\pmod{100}$.
	\begin{passo}
		$n\equiv 0 \pmod{700}$.
	\end{passo}
	\noindent Firstly, $F_n$ is periodic modulo $13$ and $29$, with periods $28$ and $14$, respectively. Since $100k\equiv0,16,4,20,8,24,12\pmod{28}$ and $100k\equiv0,2,4,8,10,12\pmod{14}$ for $k=0,1,2,3,4,5,6$, we have that $\left(4 F_n+9 \mid Q\right)=-1$ if $(n,Q)=(16,13),$ $(4,13),$ $(6,29),$ $(8,13),$ $(10,29)$. Thus, either $n\equiv 0 \pmod{700}$ as we claim or $n\equiv 600\pmod{700}$. 
	
	\noindent Note that $F_n$ is periodic modulo $281$ and $2801$ with periods $56$ and $1400$, respectively, and $n\equiv40,12\pmod{56}$ and $n\equiv600,1300\pmod{1400}$ in the case $n\equiv 600\pmod{700}$, but we have that $\left(4 F_n+9 \mid Q\right)=-1$ if  $(n,Q)=(600,2801),(12,281)$, so we conclude that $n\equiv 0\pmod{700}$.

	\begin{passo}
		$n\equiv 0 \pmod{2800}$.
	\end{passo}
	\noindent Finally, as $n\equiv 0 \pmod{700}$ and $F_n$ is periodic modulo $47$ and $1601$ with periods $32$ and $160$, respectively, we consider that $700k\equiv 0,28,24,20,16,12,8,4\pmod{32}$ and $700k\equiv 0,60,120,20,80,140,40,100\pmod{160}$ for $k=0,1,2,3,4,5,6,7$. Since the Jacobi symbol $\left(4 F_n+9 \mid Q\right)=-1$, if  $(n,Q)=(28,47),$ $(24,47),$ $(20,47),$ $(140,1601),$ $(8,47),$ $(100,1601)$ we conclude that $k=0,4$, hence we conclude that $n\equiv 0\pmod{2800}$ and this complete the proof.
\end{proof}

For the purpose of simplifying the Jacobi Symbol Criterion proved in Theorem \ref{theo1}, we shall prove the following lemma. Before that, let  $\nu_p(r)$ be the $p$-adic valuation of a integer number $r$, i. e., the exponent of the highest power of a prime $p$ which divides $r$. The $p$-adic valuation of a Fibonacci number was completely characterized, see \cite{L1}. For instance, if $m\equiv 0 \pmod{8}$ we have $\nu_7(F_m)=\nu_7(m)+1$, now we have conditions to prove the following lemma.

\begin{lemma}\label{lema3}
	If $m\equiv 0 \pmod{16}$, then $\left(\pm 8 F_m+9L_m\mid7\right)=1$.
\end{lemma}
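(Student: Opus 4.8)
The plan is to reduce everything modulo $7$ and observe that the numerator becomes a fixed quadratic residue, independent of the sign. First I would reduce the coefficients: since $8\equiv 1$ and $9\equiv 2 \pmod 7$, we have
\[
\pm 8F_m+9L_m\equiv \pm F_m+2L_m \pmod 7 .
\]
Next I would dispose of the Fibonacci term. Because $m\equiv 0\pmod{16}$ forces $m\equiv 0\pmod 8$, the valuation formula $\nu_7(F_m)=\nu_7(m)+1$ quoted just before the statement gives $7\mid F_m$, so $F_m\equiv 0\pmod 7$. (Equivalently, the Pisano period of the Fibonacci sequence modulo $7$ is $16$ and $F_0=0$.) In particular the ambiguous sign in $\pm 8F_m$ is irrelevant, and
\[
\pm 8F_m+9L_m\equiv 2L_m \pmod 7 .
\]

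The one point requiring care is pinning down $L_m$ modulo $7$ precisely, rather than only up to sign. The norm relation $L_m^2-5F_m^2=4(-1)^m=4$ (valid since $m$ is even) together with $7\mid F_m$ yields only $L_m\equiv\pm 2\pmod 7$, and the two signs give $2L_m\equiv 4$ or $2L_m\equiv 3\pmod 7$, with opposite Jacobi symbols. To resolve this I would instead use the full period computation: the Lucas sequence is periodic modulo $7$ with period $16$ and $L_0=2$, so $m\equiv 0\pmod{16}$ forces the correct value $L_m\equiv 2\pmod 7$.

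Combining these, $\pm 8F_m+9L_m\equiv 2\cdot 2=4\pmod 7$, and since $4$ is a perfect square modulo $7$ we conclude
\[
\left(\frac{\pm 8F_m+9L_m}{7}\right)=\left(\frac{4}{7}\right)=1 .
\]
There is no real obstacle here beyond verifying the two residue tables; the only subtlety, as noted, is that one must use the period of the Lucas sequence to fix the sign of $L_m\pmod 7$, since the Pell identity alone leaves an ambiguity that would change the answer.
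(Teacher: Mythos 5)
Your proof is correct, and it reaches the conclusion by a genuinely different route at the one step that matters. Both you and the paper first use $m\equiv 0\pmod 8$ (via Lengyel's valuation formula, or equivalently the period-$16$ Fibonacci table mod $7$) to kill the term $\pm 8F_m$, reducing the problem to evaluating $\left(\frac{9L_m}{7}\right)=\left(\frac{L_m}{7}\right)$. You then pin down $L_m\equiv 2\pmod 7$ exactly, by observing that the Lucas sequence has period $16$ modulo $7$ and $L_0=2$, so the numerator is $\equiv 4$, a square. The paper instead never computes $L_m\bmod 7$: it applies the doubling identity $2L_m=5F_{m/2}^2+L_{m/2}^2$ (identity \eqref{3} with $k=1$) together with $7\mid F_{m/2}$ --- which is where it uses the full hypothesis $m\equiv 0\pmod{16}$, since that is what makes $m/2\equiv 0\pmod 8$ --- to get $\left(\frac{L_m}{7}\right)=\left(\frac{2L_{m/2}^2}{7}\right)=1$. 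Your remark about the Pell identity leaving a sign ambiguity is exactly right and is the crux: $L_m\equiv 2$ and $L_m\equiv 5\pmod 7$ give opposite symbols (indeed the lemma is false for $m\equiv 8\pmod{16}$, e.g.\ $L_8=47\equiv 5$), so something beyond $L_m^2-5F_m^2=4$ is needed. You resolve it with the explicit period-$16$ Lucas table; the paper resolves it structurally with the doubling identity, which keeps the argument in the same algebraic style as the proof of Theorem \ref{theo1} and avoids tabulating residues. Both uses of the mod-$16$ hypothesis are legitimate and the two proofs are equally rigorous; yours is more elementary and self-contained, the paper's is more in keeping with its identity-driven machinery.
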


\begin{proof}
	Note that $7\mid F_{m/2}$, since $m/2\equiv0\pmod8$ and $\nu_7(F_{m/2})=\nu_7(m/2)+1\geq 1$. Thus, by $(2.3)$,
	\[ \left( \frac{L_m}{7}\right)=\left(\frac{\frac{1}{2}(5F_{m/2}^2+L_{m/2}^2)}{7} \right)=\left(\frac{10F_{m/2}^2+2L_{m/2}^2}{7}\right)=\left(\frac{2L_{m/2}^2}{7}\right)=1,\]
	hence
	\[
	\left(\frac{\pm 8F_m+9L_m}{7}\right)=\left(\frac{9L_m}{7}\right)=\left(\frac{L_m}{7}\right)=1.
	\]
\end{proof}

Note that, by the lemma above and the Jacobi Symbol Criterion with $a=1$, $k=1$, $d=3$ and $n\equiv\pm2\pmod 6$. If $n\equiv 0 \pmod{16}$, then we have that
\begin{eqnarray*}
\left(\frac{\pm 4F_{2n}+9}{L_{2n}}\right)&=&-\left(\frac{\pm 8F_{n}+9L_{n}}{64+5\cdot81}\right)=-\left(\frac{\pm 8F_{n}+9L_{n}}{469}\right)\\                                             
                                                &=&-\left(\frac{\pm 8F_{n}+9L_{n}}{7}\right)\left(\frac{\pm 8F_{n}+9L_{n}}{67}\right)\\
                                                &=&-\left(\frac{\pm 8F_{n}+9L_{n}}{67}\right).
\end{eqnarray*} 
The following lemma will be useful to calculate $\left(\pm 8 F_n+9L_n \mid 67\right)$ and we use in the proof the Sage Mathematics Software System \cite{S1}.

\begin{lemma}\label{lema4} Let $w\geq3$ be a positive integer number. If $w\equiv 0,1,2,3,4,5,6,7\pmod{8}$, then we have 
	
	\begin{eqnarray*}
		\begin{cases}
			F_{2^w}\equiv18,62,64,21,49,5,3,46\pmod{67};\\
			L_{2^w}\equiv63,14,60,47, 63,14,60,47\pmod{67};\\
			F_{2^w\cdot7}\equiv4,65,37,10,63,2,30,57\pmod{67};\\
			L_{2^w\cdot7}\equiv33,15,22,13,33,15,22,13\pmod{67};\\
			F_{2^w\cdot5^2}\equiv21,49,5,3,46,18,62,64\pmod{67};\\
			L_{2^w\cdot5^2}\equiv47,63,14,60, 47,63,14,60\pmod{67};\\
			F_{2^w\cdot5^2\cdot7}\equiv10,63,2,30,57,4,65,37\pmod{67};\\
			L_{2^w\cdot5^2\cdot7}\equiv13,33,15,22,13,33,15,22\pmod{67},
		\end{cases}
	\end{eqnarray*}
	respectively.
\end{lemma}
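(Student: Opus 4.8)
The plan is to establish all eight congruences in Lemma \ref{lema4} by a single mechanism: the sequences $(F_m \bmod 67)$ and $(L_m \bmod 67)$ are eventually periodic (in fact purely periodic, since $\gcd(67,5)=1$ and $67 \nmid 5$), so it suffices to track the indices $m \in \{2^w, 2^w\cdot 7, 2^w\cdot 5^2, 2^w\cdot 5^2\cdot 7\}$ modulo the Pisano period $\pi(67)$. First I would compute $\pi(67)$, the period of the Fibonacci sequence modulo $67$; since $67 \equiv 2 \pmod 5$, the prime $67$ is inert in $\mathbb{Q}(\sqrt 5)$, so the period divides $2(67+1)=136$ and in fact one checks $\pi(67)=136$. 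The key reduction is then that each index only matters modulo $136$, and because $136 = 8\cdot 17$, the residues of $2^w$ and its multiples modulo $136$ become eventually periodic in $w$ with period dividing the multiplicative order of $2$ modulo $17$.

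Concretely, I would next observe that for $w \geq 3$ the factor $2^w$ is divisible by $8$, so $2^w \bmod 136 = 8\cdot(2^{w-3} \bmod 17)$, and since $2$ has order $8$ modulo $17$ (as $2^8 = 256 = 15\cdot 17 + 1$), the residue $2^w \bmod 136$ cycles with period $8$ in $w$, matching exactly the eight cases $w \equiv 0,\dots,7 \pmod 8$ in the statement. Multiplying the index by $7$, by $25$, or by $175$ simply permutes these eight residues modulo $136$ (multiplication by a unit modulo $136$ when the factor is coprime to $136$, and $7,25,175$ are all odd), which is why the four blocks of $F$-values are cyclic shifts of one another and likewise the four $L$-blocks; I would note this structural observation but not rely on it, instead verifying each entry directly.

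The bulk of the work is then a finite, purely mechanical verification: for each of the $8\times 8 = 64$ index classes I would evaluate $F_m \bmod 67$ and $L_m \bmod 67$ at the appropriate residue of $m$ modulo $136$. This is precisely the kind of finite check that the paper delegates to Sage \cite{S1}, and I would do the same, presenting the output as the tabulated congruences. The identities $F_{2m}=F_mL_m$ and $L_{2m}=L_m^2-2(-1)^m$ from \eqref{1} and \eqref{2} could be used to compute $F_{2^w}, L_{2^w}$ iteratively by repeated doubling starting from $F_1=L_2/ \text{(base values)}$, which keeps the arithmetic entirely within $\mathbb{Z}/67\mathbb{Z}$ and gives a transparent by-hand route should one wish to avoid the computer.

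The main obstacle is not conceptual but bookkeeping: one must correctly pin down $\pi(67)=136$ and the order of $2$ modulo $17$, since an error in either the period or the cycle length of $2^w$ would misalign every subsequent entry, and one must be careful that the reduction of the large indices $2^w\cdot 5^2\cdot 7$ modulo $136$ is done consistently. Once the period is fixed correctly, each of the sixty-four entries is forced, so the risk lies entirely in the initial modular arithmetic rather than in any genuine difficulty; the verification via Sage eliminates this risk, which is why I would adopt the computational approach the authors indicate.
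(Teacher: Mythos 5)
Your proposal is correct and follows essentially the same route as the paper: both reduce the indices modulo the Pisano period $\pi(67)=136=8\cdot 17$, use that $2$ has order $8$ modulo $17$ so that $2^w\bmod 136$ (and hence $F_{2^wt}\bmod 67$, $L_{2^wt}\bmod 67$) is periodic in $w$ with period $8$ for $w\geq 3$, and delegate the resulting finite table to Sage. (Your side remark that multiplying the index by $7$ permutes the eight residues $2^w\bmod 136$, so that all four $F$-blocks are cyclic shifts of one another, is not accurate---$7$ is not a power of $2$ modulo $17$, so the $F_{2^w\cdot 7}$ block lies in a different orbit, while $25\equiv 2^3\pmod{17}$ does give a shift---but you explicitly do not rely on this observation.)
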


\begin{proof}
	Note that $F_n$ and $L_n$ are periodic modulo $67$ with period $136$. Let $n_w=2^wt$ be a positive integer, where $w\geq 3$ and $t$ is a fixed integer with $\gcd(t,17)=1$. Since $(n_w/2^3)_{w\geq 3}$ is a sequence of integer numbers periodic modulo 17 with period $8$, we have that $F_{n_w}$ and $L_{n_w}$ are periodic modulo $67$ with period 8. After some calculations, we obtain the values in the lemma and this complete the proof.
\end{proof}

%\begin{proof}
%	Note that $F_n$ is periodic modulo $67$ with period $136$. Let $n_w$ be a positive integer such that $n_w=2^wt$, where $w\geq 3$ and $\gcd(t,17)=1$. Since $136=2^3\cdot 17$, and $n_w=2^3\cdot2^{w-3}t$, we have that 
%	\[
%	\frac{n_w}{2^3}\equiv 2^{w-3}t \pmod{17}.
%	\]
%	However, $\mbox{ord}_{17}(2)=8$, where $\mbox{ord}_{p}(a)=r$ if $a^r\equiv 1\pmod p$ and $a^t\not\equiv 1\pmod p$ for all $1\leq t<r$. Thus $(n_w/2^3)_{w\geq 3}$ is periodic modulo 17 with period $8$. Hence, $F_{n_w}$ is periodic modulo $67$ with $w\geq3$ and the period is 8. Moreover, $L_n$ is also periodic modulo $67$ with period $136$. Similarly, we obtain that $L_{n_w}$ is periodic modulo $67$ with period 8, for $w\geq 3$. After some calculations, we obtain the values in the lemma and this complete the proof.
%\end{proof}

\noindent \textit{Proof of Theorem 2:} Now, we are able to study the equation $y^2=5x^2(x+3)^2\pm4$, we shall prove that the only integer points on these elliptic curves are $(x,y)=(0,\pm 2)$. To obtain a contradiction, suppose that $F_n=x(x+3)$ has a solution with $(n,x)\neq (0,0)$. By Lemma \ref{lema2}, we have that $n=2\cdot 2^w\cdot5^2\cdot 7t$, with $w\geq3$ and $t$ odd. Moreover, we can write $n=2kg$ such that $3\nmid k$ and $g$ is odd. Hence, by Lemma \ref{lema1}, we have
\[
\left(\frac{ 4F_n+9}{L_{2k}}\right)=\left\{
\begin{array}{ccc}
\left(\frac{ 4F_{2k}+9}{L_{2k}}\right), & \text{if} & g\equiv 1\pmod{4},\\
\left(\frac{- 4F_{2k}+9}{L_{2k}}\right), & \text{if} & g\equiv 3\pmod{4}.\\
\end{array}
\right.
\]

\noindent\textbf{Case 1:} $t\equiv 1\pmod{4}$.
\begin{itemize}
%\item If $w=3$, then putting $k=2^3$ and $g=5^2\cdot 7t$. By Lemma \ref{lema1} and the Jacobi Symbol Criterion, we have that
%\[
%\left(\frac{4F_n+9}{L_{2k}}\right)=\left(\frac{-4F_{2k}+9}{L_{2k}}\right)=-\left(\frac{-8F_k+9L_k}{67}\right)=-\left(\frac{255}{67}\right)=-1,
%\]
%where we use that $g\equiv3\pmod4$ and the Lemma \ref{lema4}.
\item If $w\equiv 0,3,5,6,7 \pmod{8}$, then we consider $k=2^w$ and $g=5^2\cdot7t$. Note that $g\equiv3\pmod4$, thus by Lemma \ref{lema1}, the Criterion and Lemma \ref{lema4}, we obtain
\[
\left(\frac{4F_n+9}{L_{2k}}\right)=\left(\frac{-4F_{2k}+9}{L_{2k}}\right)=-\left(\frac{-8F_k+9L_k}{67}\right)=-1.
\]
\item If $w\equiv 1,2,4\pmod{8}$, then we take $k=2^w\cdot5^2\cdot7$ and $g=t$. Using again Lemma \ref{lema1}, the Criterion and Lemma \ref{lema4}, we have
\[
\left(\frac{4F_n+9}{L_{2k}}\right)=\left(\frac{4F_{2k}+9}{L_{2k}}\right)=-\left(\frac{8F_k+9L_k}{67}\right)=-1.
\]
\end{itemize}

\noindent\textbf{Case 2:} $t\equiv 3\pmod{4}$.
\begin{itemize}
%\item If $w=3$, then we consider $k=2^3\cdot 5^2$ and obtain
%\[
%\left(\frac{4F_n+9}{L_{2k}}\right)=\left(\frac{4F_{2k}+9}{L_{2k}}\right)=-\left(\frac{8F_k+9L_k}{67}\right)=-1.
%\]
\item If $w\equiv 1,2,3,4,7 \pmod{8}$, then we take $k=2^w$ and $g=5^3\cdot7t$. Note that $g\equiv1\pmod4$, so in this case we have
\[
\left(\frac{4F_n+9}{L_{2k}}\right)=\left(\frac{4F_{2k}+9}{L_{2k}}\right)=-\left(\frac{8F_k+9L_k}{67}\right)=-1.
\]

\item Finally, if $w\equiv 0,5,6 \pmod{8}$, then putting $k=2^w\cdot5^2\cdot7$ and $g=t$, we conclude that
\[
\left(\frac{4F_n+9}{L_{2k}}\right)=\left(\frac{-4F_{2k}+9}{L_{2k}}\right)=-\left(\frac{-8F_k+9L_k}{67}\right)=-1.
\]
\end{itemize}

Therefore, we obtain a contradiction in all cases. These contradictions occur by supposing that $F_n=x(x+3)$ has a solution other than $(n,x)=(0,0)$. Hence, if $(x,y)$ is a integer point on the elliptic curves $$y^2=5x^2(x+3)^2+4(-1)^n$$ then $(x,y,n)\in\{(-3,\pm 2,0),(-2,\pm4,3),(-1,\pm4,3),(0,\pm 2,0)\}$, and this complete the proof of Theorem \ref{theo2}.

%Acknowledgements

\section*{Acknowledgement}
The authors are grateful to
the referee for his/her valuable suggestions which helped to improve the quality of this
work. They also thank Diego Marques and Pavel Trojovský for initial discussions on the subject. The third author was partially supported by the National Council for
Scientific and Technological Development – CNPq, Brazil (Grant 409198/2021-8).

\end{document}